\author[Raghavan]{Dilip Raghavan}
\thanks{This paper was completed when the first author was a Fields Research Fellow.
The first author thanks the Fields Institute for its kind hospitality.}
\address[Raghavan]{Department of Mathematics \\
National University of Singapore\\
Singapore 119076.}
\email{\href{dilip.raghavan@protonmail.com}{dilip.raghavan@protonmail.com}}
\urladdr{\url{https://dilip-raghavan.github.io/}}
\author[Todorcevic]{Stevo Todorcevic}
\thanks{Second author is partially supported by grants from NSERC (455916) and CNRS (IMJ-PRG UMR7586).}
\address[Todorcevic]{Department of Mathematics, University of Toronto, Toronto, Canada, M5S 2E4.}
\email{\href{stevo@math.toronto.edu}{stevo@math.toronto.edu}}
\address[Todorcevic]{Institut de Math\'{e}matique de Jussieu, UMR 7586, Case 247, 4 place Jussieu, 75252 Paris Cedex, France.}
\email{\href{todorcevic@math.jussieu.fr}{todorcevic@math.jussieu.fr}}
\address[Todorcevic]{Matemati\v{c}ki Institut, SANU, Belgrade, Serbia.}
\email{\href{stevo.todorcevic@sanu.ac.rs}{stevo.todorcevic@sanu.ac.rs}}
\date{\today}
\subjclass[2020]{03E02, 05D10, 03E55, 05C55, 54E40}
\keywords{partition calculus, Ramsey degree, strong coloring, rationals}
\title[A property of rho-functions]{A combinatorial property of rho-functions}
\def\polhk#1{\setbox0=\hbox{#1}{\ooalign{\hidewidth
    \lower1.5ex\hbox{`}\hidewidth\crcr\unhbox0}}}
\newtheorem{Theorem}{Theorem}
\newtheorem{Claim}[Theorem]{Claim}
\newtheorem{Lemma}[Theorem]{Lemma}
\newtheorem{Cor}[Theorem]{Corollary}
\theoremstyle{definition}
\newtheorem{Def}[Theorem]{Definition}
\theoremstyle{remark}
\newcommand{\restrict}{\mathord\upharpoonright}
\renewcommand{\[}{\left[}
\renewcommand{\]}{\right]}
\newcommand{\PPP}{\mathcal{P}}
\newcommand{\QQ}{\mathbb{Q}}
\newcommand{\lc}{\left|}
\newcommand{\rc}{\right|}
\newcommand\FIN{\mathrm{FIN}}
\DeclareMathOperator{\otp}{otp}
\DeclareMathOperator{\dom}{dom}
\newcommand{\Pset}{\mathcal{P}}
\newcommand{\UUU}{{\mathcal{U}}}
\newcommand{\III}{{\mathcal{I}}}
\newcommand{\TTT}{{\mathcal{T}}}
\newcommand{\RR}{\mathbb{R}}
\newcommand{\pr}[2]{\left\langle #1, #2 \right\rangle}
\newcommand{\seq}[4]{\left\langle {#1}_{#2}: #2 #3 #4 \right\rangle}
\newcommand{\rr}{\mathtt{r}}
\newcommand{\ball}[2]{{B}_{d}\left( {x}_{#1}, \frac{1}{#2+1} \right)}
\newcommand{\pc}[2]{{\[#1\]}^{#2}}
\begin{document}
\begin{abstract}
 We show that if $\TTT$ is any Hausdorff topology on ${\omega}_{1}$, then any subset of ${\omega}_{1}$ which is homeomorphic to the rationals under $\TTT$ can be refined to a homeomorphic copy of the rationals on which $\bar{\rho}$ is shift-increasing.
\end{abstract}
\maketitle
\section{Introduction} \label{sec:intro}
Todorcevic~\cite{squarebracket} introduced walks on ordinals and analyzed their characteristics through various functions, which are collectively known as \emph{rho-functions}.
The study of the properties of these rho-functions has been critical to constructing and understanding combinatorial structures on uncountable cardinals, especially the first uncountable cardinal ${\omega}_{1}$.
The monograph \cite{walks} presents numerous applications of rho-functions to diverse areas of mathematics.

An important and useful class of rho-functions are those that satisfy certain ultrametric triangle inequalities.
In \cite{walks}, Todorcevic showed the existence of such a function $\rho: \pc{{\kappa}^{+}}{2} \rightarrow \kappa$ for every regular $\kappa$.
Chapter 3 of \cite{walks} develops a detailed theory of such rho-functions in the case of the first uncountable cardinal -- i.e.\@ ${\kappa}^{+} = {\omega}_{1}$ -- and presents several applications, including the construction of gaps in $\Pset(\omega) \slash \FIN$.
We recall the following definitions.
\begin{Def} \label{def:rhobar}
 A sequence $\bar{C} = \seq{C}{\alpha}{<}{{\omega}_{1}}$ is called a \emph{$C$-sequence} if the following hold:
 \begin{enumerate}
  \item
  ${C}_{\alpha} \subseteq \alpha$;
  \item
  ${C}_{\alpha+1} = \{\alpha\}$;
  \item
  if $\alpha$ is a limit ordinal, then $\otp({C}_{\alpha}) = \omega$ and $\sup({C}_{\alpha}) = \alpha$.
 \end{enumerate}
 Given a fixed $C$-sequence $\bar{C}$, $\rho: {\[ {\omega}_{1} \]}^{2} \rightarrow \omega$ is defined by recursion as follows:
 \begin{align*}
  \rho(\alpha, \beta) = \max\left\{ \lc {C}_{\beta} \cap \alpha \rc, \rho\left( \alpha, \min\left( {C}_{\beta} \setminus \alpha \right)\right), \rho(\xi, \alpha): \xi \in {C}_{\beta} \cap \alpha \right\},
 \end{align*}
 for $\alpha < \beta < {\omega}_{1}$ with the boundary condition that $\rho(\alpha, \alpha) = 0$.
 $\bar{\rho}: {\[{\omega}_{1}\]}^{2} \rightarrow \omega$ is defined by
 \begin{align*}
  \bar{\rho}(\alpha, \beta) = {2}^{\rho( \alpha, \beta )} \cdot \left( 2 \cdot \lc \left\{ \xi \leq \alpha: \rho(\xi, \alpha) \leq \rho(\alpha, \beta) \right\} \rc + 1 \right).
 \end{align*}
\end{Def}
The following was proved in Lemma 3.2.2 of~\cite{walks}.
\begin{Lemma} \label{lem:rhobar}
 For any $\alpha < \beta < \gamma < {\omega}_{1}$,
 \begin{enumerate}
  \item
  $\bar{\rho}(\alpha, \gamma) \neq \bar{\rho}(\beta, \gamma)$;
  \item
  $\bar{\rho}(\alpha, \gamma) \leq \max\left\{ \bar{\rho}(\alpha, \beta), \bar{\rho}(\beta, \gamma) \right\}$;
  \item
  $\bar{\rho}(\alpha, \beta) \leq \max\left\{ \bar{\rho}(\alpha, \gamma), \bar{\rho}(\beta, \gamma) \right\}$.
 \end{enumerate}
\end{Lemma}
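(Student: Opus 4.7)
The plan is to exploit the unique $2$-adic decomposition of $\bar{\rho}$. Writing $\bar{\rho}(\alpha,\beta) = 2^{\rho(\alpha,\beta)} \cdot (2N(\alpha,\beta)+1)$ with $N(\alpha,\beta) := \lc \{\xi \leq \alpha : \rho(\xi,\alpha) \leq \rho(\alpha,\beta)\} \rc$, the odd factor $2N(\alpha,\beta)+1$ forces the exponent $\rho(\alpha,\beta)$ to be exactly the $2$-adic valuation of $\bar{\rho}(\alpha,\beta)$. So any equality or comparison of $\bar{\rho}$-values decodes componentwise, as information about $\rho$ on the one hand and about the counting invariants $N$ on the other. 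Throughout, I would invoke the two standard subadditive inequalities for $\rho$ from Lemma~3.2.1 of~\cite{walks}: for $\alpha < \beta < \gamma < {\omega}_{1}$, $\rho(\alpha,\gamma) \leq \max\{\rho(\alpha,\beta),\rho(\beta,\gamma)\}$ and $\rho(\alpha,\beta) \leq \max\{\rho(\alpha,\gamma),\rho(\beta,\gamma)\}$.

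The workhorse for all three parts is a monotonicity principle for the sublevel sets ${S}_{t}(\eta) := \{\xi \leq \eta : \rho(\xi,\eta) \leq t\}$: if $\eta < \zeta$ and $\rho(\eta,\zeta) \leq t$, then subadditivity applied coordinatewise gives $\rho(\xi,\zeta) \leq \max\{\rho(\xi,\eta),\rho(\eta,\zeta)\} \leq t$ for every $\xi \in {S}_{t}(\eta)$, so ${S}_{t}(\eta) \subseteq {S}_{t}(\zeta)$; this inclusion is \emph{strict} because $\zeta \in {S}_{t}(\zeta) \setminus {S}_{t}(\eta)$.

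For (1), an equality $\bar{\rho}(\alpha,\gamma) = \bar{\rho}(\beta,\gamma)$ forces $\rho(\alpha,\gamma) = \rho(\beta,\gamma) =: m$ from the $2$-adic valuations; subadditivity then yields $\rho(\alpha,\beta) \leq m$, and the monotonicity principle gives ${S}_{m}(\alpha) \subsetneq {S}_{m}(\beta)$, contradicting the equality of the odd factors $2N(\alpha,\gamma)+1$ and $2N(\beta,\gamma)+1$. For (2) and (3), I would set $p := \rho(\alpha,\gamma)$, $q := \rho(\alpha,\beta)$, $r := \rho(\beta,\gamma)$ and split on which of these exponents is strictly largest. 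Whenever one exponent exceeds another by at least one, the extra factor of $2$ in the larger power of $2$ alone dominates the odd-factor discrepancy, since the odd factors are monotone in the threshold. When two exponents coincide, the relevant sublevel sets are either literally the same set (e.g.\ if $p = q$, then ${S}_{p}(\alpha) = {S}_{q}(\alpha)$) or strictly nested via the monotonicity principle, and the desired non-strict inequality drops out.

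The main obstacle is pure bookkeeping: threading the subcases of (2) and (3) and verifying, in each one, that the right pair of sublevel sets is compared and that the right direction of inequality between ``$2^{\text{exponent}} \cdot \text{odd}$'' values is extracted. No conceptual novelty is required beyond the two subadditive properties of $\rho$ and the observation that the odd factor of $\bar{\rho}(\alpha,\cdot)$ is monotone in its $2$-adic exponent.
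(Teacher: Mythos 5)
The paper does not prove this lemma itself; it only cites Lemma 3.2.2 of \cite{walks}, whose proof is exactly the argument you sketch: read off $\rho(\alpha,\beta)$ as the $2$-adic valuation of $\bar{\rho}(\alpha,\beta)$, invoke the two subadditive inequalities for $\rho$, and compare the sublevel sets $\{\xi \leq \eta : \rho(\xi,\eta) \leq t\}$ across base points via your strict-inclusion principle. Your outline is correct and completable as stated; the one ingredient you leave tacit is the local finiteness of $\rho$ (each sublevel set $\{\xi \leq \eta : \rho(\xi,\eta) \leq t\}$ is finite), which is what converts the strict inclusions into strict inequalities between the odd factors and is already required for $\bar{\rho}$ to be well defined.
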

Lopez-Abad and Todorcevic~\cite{lopez-abad-todorcevic-13} introduced a sequence of higher-dimensional functions having analogous properties.
For each $i \leq n < \omega$, they defined a function ${f}^{(n)}_{i}: \pc{{\omega}_{n}}{i+1} \rightarrow {\omega}_{n-i}$, and used these functions to construct normalized weakly-null sequences of length ${\omega}_{n}$ without any unconditional subsequences.
Key to their construction was the fact that the ${f}^{(n)}_{i}$ could be made shift-increasing on some infinite subset of every infinite set.
\begin{Def} \label{def:shift-increasing-general}
 Suppose $n \geq 1$ is a natural number.
 Suppose $\gamma$ and $\delta$ are ordinals.
 For a function $f: \pc{\delta}{n} \rightarrow \gamma$, a subset $B \subseteq \delta$ is said to be \emph{$f$-shift-increasing} if for any ${\alpha}_{1} < \dotsb < {\alpha}_{n} < {\alpha}_{n+1}$ all belonging to $B$, $f(\{{\alpha}_{1}, \dotsc, {\alpha}_{n}\}) \leq f(\{{\alpha}_{2}, \dotsc, {\alpha}_{n+1}\})$.
\end{Def}
Lopez-Abad and Todorcevic showed in \cite{lopez-abad-todorcevic-13} that for every $i \leq n$ and every $A \in \pc{{\omega}_{n}}{{\aleph}_{0}}$, there exists $B \in \pc{A}{{\aleph}_{0}}$ such that $B$ is ${f}^{(n)}_{i}$-shift-increasing.
In particular, for every $A \in \pc{{\omega}_{1}}{{\aleph}_{0}}$, there exists $B \in \pc{A}{{\aleph}_{0}}$ such that $B$ is shift-increasing for ${f}^{(1)}_{1} = \bar{\rho}$.
In this paper we generalize this result to topologically large sets.
We are interested in the situation where $\TTT$ is a Hausdorff topology on ${\omega}_{1}$.
The main result of this paper shows that if $A \in \pc{{\omega}_{1}}{{\aleph}_{0}}$ is a homeomorphic copy of $\QQ$ under $\TTT$, then there exists $B \in \pc{A}{{\aleph}_{0}}$ such that $B$ is homeomorphic to $\QQ$ and $B$ is $\bar{\rho}$-shift-increasing.
An important difference between our situation and the one in \cite{lopez-abad-todorcevic-13} is that infinite sets of ordinals satisfy Ramsey's theorem for pairs, but as Baumgartner~\cite{baumtop} showed, the topological space $\QQ$ badly fails Ramsey's theorem.
For this reason, the proof of Theorem \ref{thm:shiftinc} below is considerably trickier than the corresponding result in \cite{lopez-abad-todorcevic-13}, which relies on Ramsey's theorem for infinite sets.
We expect our result will have further applications to topology and functional analysis.
\section{Notation} \label{sec:notation}
Our set-theoretic notation is standard.
For any $A$, $\Pset(A)$ denotes the powerset of $A$.
When $\kappa$ is a cardinal, $\pc{X}{\kappa}$ is $\{A \subseteq X: \lc A \rc = \kappa\}$, and $\pc{X}{< \kappa}$ denotes $\{A \subseteq X: \lc A \rc < \kappa\}$.

Given a set $a$, $\III$ is said to be an \emph{ideal} on $a$ if $\III$ is a subset of $\Pset(a)$ such that the following conditions hold: if $b \subseteq a$ is finite, then $b \in \III$; if $b \in \III$ and $c \subseteq b$, then $c \in \III$; if $b \in \III$ and $c \in \III$, then $b \cup c \in \III$; and $a \notin \III$.
The first condition is sometimes expressed by saying that $\III$ is \emph{non-principal}, and the last condition by saying that $\III$ is \emph{proper.}

For sets $A$ and $B$, ${A}^{B}$ is the collection of all functions from $B$ to $A$.
If $\delta$ is an ordinal, then ${A}^{< \delta} = {\bigcup}_{\gamma < \delta}{{A}^{\gamma}}$.
If $f$ is a function, then $\dom(f)$ is the domain of $f$, and if $X \subseteq \dom(f)$, then $f''X$ is the image of $X$ under $f$ -- that is, $f''X = \{f(x): x \in X\}$.

For $\sigma \in {\omega}^{< \omega}$ and $n \in \omega$, ${\sigma}^{\frown}{\langle n \rangle}$ is the concatenation of $\sigma$ with the one element sequence $\langle n \rangle$.
Formally, ${\sigma}^{\frown}{\langle n \rangle} = \sigma \cup \{ \pr{\dom(\sigma)}{n} \}$.
$T \subseteq {\omega}^{< \omega}$ is a \emph{subtree} if it is closed under initial segments, that is if $\forall \sigma \in T \forall k \leq \dom(\sigma)\[\sigma \restrict k \in T\]$.

If $d$ is a metric on $X$, then ${B}_{d}(y, \varepsilon)$ denotes $\{z \in X: d(y, z) < \varepsilon\}$, for all $y \in X$ and $\varepsilon \in \RR$.
A topological space $\pr{X}{\TTT}$ is \emph{dense-in-itself} if for each $x \in X$ and each open neighborhood $U$ of $x$, there exists $y \in U$ with $y \neq x$.
A theorem of Sierpi{\' n}ski (see \cite{engelking}) says that $\pr{X}{\TTT}$ is homeomorphic to $\QQ$ with its usual topology if and only if it is non-empty, countable, metrizable, and dense-in-itself.
\section{Getting $\bar{\rho}$ to be shift increasing on a copy of $\QQ$} \label{sec:main}
Even though our main result is about functions on ${\omega}_{1}$, its proof reduces to an analysis of functions on countable sets satisfying certain properties.
We will begin with the proof of this countable Ramsey theoretic statement, which could be useful in other contexts.

Assume $\rr: {\[\omega\]}^{2} \rightarrow \omega$ is a function with the following three properties:
\begin{enumerate}
 \item
 $\forall k, l, m \in \omega\[k < l < m \implies \rr(k,m) \neq \rr(l,m)\]$;
 \item
 $\forall k, l, m \in \omega\[k < l < m \implies \rr(k, l) \leq \max\left\{ \rr(k,m), \rr(l,m)\right\}\]$;
 \item
 $\forall k, l, m \in \omega\[k < l < m \implies \rr(k, m) \leq \max\left\{ \rr(k,l), \rr(l,m) \right\}\]$.
\end{enumerate}
It is easy to see that these properties of $\rr$ imply that for any $k, l, m \in \omega$ with $k < l < m$, if $\rr(k,m) > \rr(l,m)$, then $\rr(k,l)=\rr(k,m)$, and that if $\rr(k,l) > \rr(l,m)$, then $\rr(k,m)=\rr(k,l)$.
\begin{Def} \label{def:sinc}
 $B \subseteq \omega$ is \emph{$\rr$-shift-increasing} if
 \begin{align*}
  \forall k, l, m \in B\[k < l < m \implies \rr(k,l) \leq \rr(l,m)\].
 \end{align*}
\end{Def}
Assume that $X$ is a topological space and that $\seq{x}{n}{\in}{\omega}$ is a sequence of distinct points of $X$ (i.e.\@ ${x}_{n} = {x}_{m}$ if and only if $n=m$) with the property that the subspace $\left\{ {x}_{n}: n \in \omega \right\}$ is homeomorphic to $\QQ$.
Fix a metric $d$ on $\left\{ {x}_{n}: n \in \omega \right\}$ that is compatible with the subspace topology.
Observe that for each $\varepsilon > 0$ and each $n \in \omega$, ${B}_{d}({x}_{n}, \varepsilon)$ is also homeomorphic to $\QQ$.
\begin{Def} \label{def:scattered}
 $A \subseteq \omega$ is said to be \emph{scattered} if there is no $B \subseteq A$ so that $\left\{ {x}_{n}: n \in B \right\}$ is homeomorphic to $\QQ$.
 
 It is clear that $\III = \{A \subseteq \omega: A \ \text{is scattered}\}$ is a proper non-principal ideal on $\omega$.
 Define ${\III}^{+} = \Pset(\omega) \setminus \III$.
\end{Def}
\begin{Def} \label{def:Aij}
 For $i, j \in \omega$, define ${A}_{i, j} = \left\{n \in \omega: {x}_{n} \in \ball{i}{j} \right\}$.
\end{Def}
\begin{Lemma} \label{lem:dense}
 The following hold:
 \begin{enumerate}
  \item
  for all $A \subseteq \omega$, $A \in {\III}^{+}$ if and only if there exists $B \subseteq A$ such that $B \neq \emptyset$ and $\forall i \in B \forall j \in \omega \exists n \in B \cap {A}_{i, j}\[n \neq i\]$;
  \item
  $\forall A \in {\III}^{+} \exists B \subseteq A\[B \in {\III}^{+} \ \text{and} \ \forall i \in B \forall j \in \omega \[B \cap {A}_{i, j} \in {\III}^{+}\]\]$.
 \end{enumerate}
\end{Lemma}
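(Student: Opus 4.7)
The overall plan is to recognize that both parts of the lemma are essentially restatements of Sierpiński's characterization of $\QQ$ (non-empty, countable, metrizable, dense-in-itself) in the concrete language of the enumeration $\seq{x}{n}{\in}{\omega}$. The sets $\ball{i}{j}$ form, as $j$ varies, a neighborhood basis of ${x}_{i}$ in the metric $d$, so the sets ${A}_{i,j}$ encode this neighborhood basis combinatorially on the index side. The task is then to translate topological conditions on $\{{x}_{n} : n \in B\}$ into statements about ${A}_{i,j}$.

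For (1), I would argue both directions using Sierpi\'{n}ski's theorem. For the forward direction, assume $A \in {\III}^{+}$. By the definition of $\III$ there is some $B \subseteq A$ such that $\{{x}_{n} : n \in B\}$ is homeomorphic to $\QQ$, hence in particular non-empty and dense-in-itself in the subspace topology. Fixing $i \in B$ and $j \in \omega$, the set $\ball{i}{j} \cap \{{x}_{n} : n \in B\}$ is a neighborhood of ${x}_{i}$ in $\{{x}_{n} : n \in B\}$, and dense-in-itself forces the existence of some ${x}_{n} \neq {x}_{i}$ in it, i.e., some $n \in B \cap {A}_{i,j}$ with $n \neq i$. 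For the converse, given such a $B$, the subspace $\{{x}_{n} : n \in B\}$ is non-empty, countable, and metrizable as a subspace of the metric space $(\{{x}_{n} : n \in \omega\}, d)$; since the balls $\ball{i}{j}$ form a neighborhood basis of ${x}_{i}$, the stated condition says precisely that ${x}_{i}$ is not isolated in $\{{x}_{n} : n \in B\}$ for any $i \in B$. Sierpi\'{n}ski's theorem then gives $\{{x}_{n} : n \in B\} \cong \QQ$, so $B$ witnesses $A \in {\III}^{+}$.

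For (2), the plan is to observe that a $B$ obtained from (1) automatically satisfies the stronger conclusion. Fix such a $B \subseteq A$ with $\{{x}_{n} : n \in B\} \cong \QQ$, fix $i \in B$ and $j \in \omega$, and consider $C := B \cap {A}_{i,j}$. Then $\{{x}_{n} : n \in C\} = \{{x}_{n} : n \in B\} \cap \ball{i}{j}$ is a non-empty open subset of a copy of $\QQ$ (non-empty because ${x}_{i} \in \ball{i}{j}$). A non-empty open subset of $\QQ$ is again non-empty, countable, metrizable, and dense-in-itself: the dense-in-itself property is inherited because openness preserves the fact that every neighborhood basis element meets the open set in a further open set of $\QQ$, which cannot be a singleton. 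Sierpi\'{n}ski's theorem then yields $\{{x}_{n} : n \in C\} \cong \QQ$, so $C \in {\III}^{+}$.

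The only delicate step is really a careful bookkeeping one, namely checking that subspace-openness and metric-openness coincide and that the neighborhood basis $\{\ball{i}{j} : j \in \omega\}$ is captured by the family $\{{A}_{i,j} : j \in \omega\}$ on the index side; no hard combinatorics or transfinite constructions are needed, because Sierpi\'{n}ski's theorem does all the structural work.
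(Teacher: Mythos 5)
Your proof is correct. For part (1) it follows essentially the same route as the paper: translate the Sierpi\'{n}ski characterization into the combinatorial condition via the fact that the balls $\ball{i}{j}$ form a neighborhood base of ${x}_{i}$. For part (2), however, you take a genuinely different and slicker path. The paper does not invoke the topology of open subsets of $\QQ$ at all at this point; instead it applies the combinatorial criterion of part (1) directly to the set $B \cap {A}_{i,j}$, which forces it to verify the density condition for that set by hand: given $k \in B \cap {A}_{i,j}$ and $l \in \omega$, it chooses $q$ with $\frac{1}{q+1} < \frac{1}{l+1}$ and $d({x}_{k},{x}_{i}) + \frac{1}{q+1} < \frac{1}{j+1}$ and runs a triangle-inequality computation to produce a point of $B \cap {A}_{i,j} \cap {A}_{k,l}$ other than $k$. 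You instead observe that $\{{x}_{n} : n \in B \cap {A}_{i,j}\}$ is exactly $\{{x}_{n} : n \in B\} \cap \ball{i}{j}$, a non-empty open subset of a copy of $\QQ$, hence itself a copy of $\QQ$ by Sierpi\'{n}ski; this makes $B \cap {A}_{i,j} \in {\III}^{+}$ immediate and hides the triangle inequality inside the standard fact that open balls are open and that open subspaces of dense-in-itself spaces are dense-in-itself. Your argument is shorter and arguably cleaner; the paper's version has the mild virtue of staying entirely inside the index-side formulation of part (1), which is the form reused verbatim later (e.g.\ in Claim \ref{claim:shift1}), but nothing in the rest of the paper depends on the particular way (2) is verified. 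One small bookkeeping point worth making explicit in your write-up: $C = B \cap {A}_{i,j}$ is non-empty because $i \in {A}_{i,j}$ (as $d({x}_{i},{x}_{i}) = 0$), which you do note parenthetically.
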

\begin{proof}
 For (1): fix $A \subseteq \omega$.
 By definition, $A \in {\III}^{+}$ if and only if
 \begin{align*}
  \exists B \subseteq A\[\{{x}_{n}: n \in B\} \ \text{is homeomorphic to} \ \QQ\].
 \end{align*}
 Consider any $B \subseteq A$.
 By a theorem of Sierpinski, $\{{x}_{n}: n \in B\}$ is homeomorphic to $\QQ$ if and only if $\{{x}_{n}: n \in B\}$ is countable, metrizable, non-empty, and dense-in-itself.
 Since $\{{x}_{n}: n \in \omega\}$ is metrizable and countable, it suffices to show that $\{{x}_{n}: n \in B\}$ is dense-in-itself if and only if $\forall i \in B \forall j \in \omega \exists n \in B \cap {A}_{i, j}\[n \neq i\]$.
 First assume that $\{{x}_{n}: n \in B\}$ is dense-in-itself.
 Fix $i \in B$ and $j \in \omega$.
 Then $\ball{i}{j}$ is an open set in $\{{x}_{n}: n \in \omega\}$, and so $\ball{i}{j} \cap \{ {x}_{n}: n \in B \}$ is an open neighborhood of ${x}_{i}$ in $\{ {x}_{n}: n \in B \}$.
 As $\{ {x}_{n}: n \in B \}$ is dense-in-itself, there exists $y \in \ball{i}{j} \cap \{ {x}_{n}: n \in B \}$ with $y \neq {x}_{i}$.
 Thus $y={x}_{m}$ for some $m \in B$ with $i \neq m$.
 By definition of ${A}_{i, j}$, $m \in {A}_{i, j}$.
 This proves one direction.
 For the converse, assume $\forall i \in B \forall j \in \omega \exists n \in B \cap {A}_{i, j}\[n \neq i\]$.
 Consider ${x}_{i}$ for some $i \in B$ and some open subset $V$ of $\{ {x}_{n}: n \in B \}$ with ${x}_{i} \in V$.
 Then $V = U \cap \{{x}_{n}: n \in B\}$ for some open set $U$ in $\{{x}_{n}: n \in \omega\}$.
 Thus $\ball{i}{j} \subseteq U$ for some $j \in \omega$.
 By the assumption there is $n \in B \cap {A}_{i, j}$ with $n \neq i$.
 As $n, i \in \omega$ and $n \neq i$, $y={x}_{n} \neq {x}_{i}$.
 By definition of ${A}_{i, j}$, $y={x}_{n} \in \ball{i}{j}$, $y={x}_{n} \in \{{x}_{m}: m \in B\}$, whence $y \in U \cap \{{x}_{m}: m \in B\} = V$.
 As $y \in V$ and $y \neq {x}_{i}$, this shows $\{{x}_{m}: m \in B\}$ is dense-in-itself, proving (1).
 
 For (2): fix $A \in {\III}^{+}$.
 Applying (1) to $A$, there exists $B \subseteq A$ so that $B \neq \emptyset$ and $\forall i \in B \forall j \in \omega \exists n \in B \cap {A}_{i, j}\[n \neq i\]$.
 Applying (1) to $B$, we see that $B \in {\III}^{+}$.
 Fix $i \in B$ and $j \in \omega$.
 To see $B \cap {A}_{i, j} \in {\III}^{+}$, we apply (1) again.
 We have $B \cap {A}_{i, j} \subseteq B \cap {A}_{i, j}$ and by the choice of $B$, $\exists n \in B \cap {A}_{i, j}\[n \neq i\]$, which implies $B \cap {A}_{i, j} \neq \emptyset$.
 Fix $k \in B \cap {A}_{i, j}$ and $l \in \omega$.
 It suffices to find $m \in B \cap {A}_{i, j} \cap {A}_{k, l}$ with $m \neq k$.
 By the definition of ${A}_{i, j}$, $d({x}_{k}, {x}_{i}) < \frac{1}{j+1}$.
 Choose $q \in \omega$ so that $\frac{1}{q+1} < \frac{1}{l+1}$ and $d({x}_{k}, {x}_{i}) + \frac{1}{q+1} < \frac{1}{j+1}$.
 By the choice of $B$, there exists $m \in B \cap {A}_{k, q}$ with $m \neq k$.
 Thus $m \in \omega$ and $d({x}_{m}, {x}_{k}) < \frac{1}{q+1} < \frac{1}{l+1}$, whence $m \in {A}_{k, l}$.
 Also $d({x}_{m}, {x}_{i}) \leq d({x}_{m}, {x}_{k}) + d({x}_{k}, {x}_{i}) < \frac{1}{q+1} + d({x}_{k}, {x}_{i}) < \frac{1}{j+1}$, whence $m \in {A}_{i, j}$.
 Therefore $m \in B \cap {A}_{i, j} \cap {A}_{k, l}$, as required.
 This concludes the proof that $B \cap {A}_{i, j} \in {\III}^{+}$.
\end{proof}
\begin{Theorem} \label{thm:shiftinc}
 For every $A \in {\III}^{+}$, there exists $B \subseteq A$ such that $B$ is $\rr$-shift-increasing, $B \neq \emptyset$, and $\forall i \in B \forall j \in \omega \exists n \in B \cap {A}_{i, j}\[n \neq i\]$.
\end{Theorem}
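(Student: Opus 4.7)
The plan is to construct $B$ as the union $\bigcup_{n < \omega} F_n$ of an increasing sequence of finite $\rr$-shift-increasing subsets of $A$, via a fusion argument accompanied by a decreasing sequence of cones $C_n \subseteq A$ in ${\III}^{+}$. A useful combinatorial reformulation simplifies the invariants: properties (1), (2), (3) together with the observation noted just before Definition~\ref{def:sinc} imply that for $a_1 < a_2 < a_3$, the inequality $\rr(a_1, a_2) \leq \rr(a_2, a_3)$ is equivalent to the strict inequality $\rr(a_1, a_3) < \rr(a_2, a_3)$. Hence $B \subseteq \omega$ is $\rr$-shift-increasing if and only if for every $b \in B$ the map $a \mapsto \rr(a, b)$ is strictly increasing on $B \cap b$. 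I would first apply Lemma~\ref{lem:dense}(2), iterated if necessary, to pass from $A$ to a subset $A^{\ast} \in {\III}^{+}$ with the hereditary density property: $D \cap A_{i,j} \in {\III}^{+}$ for every ${\III}^{+}$ subset $D \subseteq A^{\ast}$, every $i \in D$, and every $j \in \omega$.

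The fusion proceeds by enumerating the density tasks $\pr{i}{j}$ so that each is addressed cofinally often, while maintaining at stage $n$ the invariant $(\ast)$: every $m \in C_n$ with $m > \max F_n$ satisfies that $\rr(\cdot, m)$ is strictly increasing on $F_n$. Given $(F_n, C_n)$ obeying $(\ast)$, I pick $b_{n+1} \in C_n \cap A_{i_n, j_n}$ addressing the next task and satisfying $b_{n+1} > \max F_n$; since every ${\III}^{+}$ set is unbounded in $\omega$ (being infinite), such $b_{n+1}$ exists. Setting $F_{n+1} = F_n \cup \{b_{n+1}\}$, the extended $(\ast)$-invariant at the new stage reduces, by the reformulation above together with $(\ast)$ at stage $n$, to the single new condition $\rr(\max F_n, m) < \rr(b_{n+1}, m)$ on $m > b_{n+1}$ in the refined cone. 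I would then refine $C_n$ to a $C_{n+1} \in {\III}^{+}$ forcing this condition, after which a further application of Lemma~\ref{lem:dense}(2) restores hereditary density for subsequent stages.

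The main obstacle, and the source of the paper's remark that this proof is considerably trickier than the corresponding $\omega$-case, is this refinement. By the topological pigeonhole $\QQ \to (\QQ)^1_2$ (which holds since $\QQ$ is not a finite union of scattered sets), within $C_n \cap (b_{n+1}, \infty)$ either the good set of $m$ with $\rr(\max F_n, m) < \rr(b_{n+1}, m)$ is in ${\III}^{+}$ or its complement, the bad set, is. A ${\III}^{+}$ bad set forces, via the observation after properties (1)--(3), that $\rr(\max F_n, m) = \rr(\max F_n, b_{n+1})$ on an ${\III}^{+}$ subset of $C_n$. To prevent this I would select $b_{n+1}$ strategically: partition $C_n \cap A_{i_n, j_n}$ according to the values of the map $m \mapsto \rr(\max F_n, m)$, and whenever possible choose $b_{n+1}$ whose $\rr$-value to $\max F_n$ lies in a scattered fibre, forcing the bad set to be scattered. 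When the whole ${\III}^{+}$-mass of $C_n \cap A_{i_n, j_n}$ is concentrated on finitely many ${\III}^{+}$-fibres, an iterated refinement using the ultrametric-like structure of $\rr$ and a Cantor-Bendixson--style rank argument on $\III$ is required. This delicate case-analysis replaces the invocation of Ramsey's theorem for infinite sets that sufficed in the $\omega$-case.
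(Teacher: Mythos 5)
Your overall architecture (recursive construction, bookkeeping of density tasks, and the reformulation of shift-increasing as strict monotonicity of $\rr(\cdot,m)$ on initial segments) matches the paper's opening moves, but three steps are genuinely broken. First, the ``hereditary density property'' you want for $A^{\ast}$ --- that $D \cap A_{i,j} \in \III^{+}$ for \emph{every} $\III^{+}$ subset $D \subseteq A^{\ast}$ and every $i \in D$ --- is unachievable: inside any $A^{\ast} \in \III^{+}$ take $D$ to be a copy of $\QQ$ together with one extra point $i$ isolated from it; then $D \in \III^{+}$ but $D \cap A_{i,j}$ is a singleton for large $j$. No amount of iterating Lemma \ref{lem:dense}(2) fixes this; the correct statement is local and only holds modulo discarding a set in $\III$ (this is exactly the paper's Claim \ref{claim:shift1}). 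Second, your justification for the existence of $b_{n+1} \in C_{n} \cap A_{i_{n},j_{n}}$ --- ``every $\III^{+}$ set is unbounded in $\omega$'' --- conflates the ordering of the indices with the metric topology: $A_{i_n,j_n}$ is a small ball around ${x}_{i_n}$, not a tail of $\omega$, and a single decreasing sequence of cones $C_{n} \in \III^{+}$ can drift entirely away from a previously chosen point $i$, leaving $C_{n} \cap A_{i,j} = \emptyset$ for large $j$ and making the density task for $i$ impossible to complete. This is precisely why the paper carries a \emph{separate} ultrafilter $\UUU_{s} \subseteq \III^{+}$ concentrating on the balls around each chosen point ${k}_{s}$, and must maintain the cross-compatibility condition (4) (that every good set ${E}_{p,q}$ lies in \emph{all} the ultrafilters $\UUU_{r}$); the verification of that compatibility for old ultrafilters is Claim \ref{claim:shift3}, which has no counterpart in your scheme.

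Third, and most importantly, the heart of the proof is missing. You correctly observe that the bad set is contained in the fibre $\{m : \rr(\max F_{n}, m) = \rr(\max F_{n}, b_{n+1})\}$, but your treatment of the case where the $\III^{+}$-mass is carried by non-scattered fibres is only the sentence ``an iterated refinement \ldots and a Cantor--Bendixson-style rank argument \ldots is required,'' which is not an argument (and note $\III$ is not a $\sigma$-ideal, so fibre-counting alone will not close the case). The paper's actual mechanism (Claims \ref{claim:shift2} and \ref{claim:shift3}) is a finite pigeonhole on the \emph{values} of $\rr$: assuming the good set is small one extracts $u+2$ points ${i}_{0} < \dotsb < {i}_{u+1}$ with $\rr({k}_{p}, {i}_{v}) = u$ for all $v$ and $\rr({i}_{v}, {i}_{u+1}) < u$ for all $v \leq u$, so two of the $u+1$ values $\rr({i}_{v}, {i}_{u+1})$ collide, contradicting property (1) of $\rr$. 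Also missing is the step where one picks $q$ maximizing $\rr({k}_{q}, {k}_{s+1})$ and uses the ultrametric inequalities to show ${F}_{q,{k}_{s+1}} \subseteq {F}_{p,{k}_{s+1}}$ for all $p \leq s$, which is what lets a single ultrafilter $\UUU_{s+1}$ handle all earlier points simultaneously. As it stands the proposal identifies the difficulty but does not overcome it.
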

\begin{proof}
 We will ensure $B$ has the following property:
 \begin{align*}
  \forall k, l, m \in B \[k < l < m \implies \rr(k, m) < \rr(l, m)\].
\end{align*} 
 To see that this implies that $B$ is $\rr$-shift-increasing, assume for a contradiction that for some $k, l, m \in B$ with $k < l < m$, $\rr(k, l) > \rr(l, m)$.
 Then by the properties of $\rr$ discussed earlier, $\rr(k, m) = \rr(k, l) > \rr(l, m)$, contradicting the property of $B$.
 
 Fix a $1-1$ enumeration $\seq{\sigma}{s}{<}{\omega}$ of ${\omega}^{< \omega}$ such that $\forall s < s' < \omega\[{\sigma}_{s'} \not\subseteq {\sigma}_{s} \]$.
 Note that ${\sigma}_{0} = \emptyset$ and that for each $s > 0$, there exist unique $r < s$ and $j \in \omega$ so that ${\sigma}_{s} = {\sigma}^{\frown}_{r}{\langle j \rangle}$.
 Applying (2) of Lemma \ref{lem:dense}, fix $D \subseteq A$ so that $D \in {\III}^{+}$ and $\forall i \in D \forall j \in \omega \[D \cap {A}_{i, j} \in {\III}^{+}\]$.
 Construct $\seq{k}{s}{<}{\omega}$ and $\seq{\UUU}{s}{<}{\omega}$ with the following properties:
 \begin{enumerate}
  \item
  ${k}_{s} \in D$, $\forall r < s \[ {k}_{r} < {k}_{s} \]$, $\forall q, r \[q < r < s \implies \rr({k}_{q}, {k}_{s}) < \rr({k}_{r}, {k}_{s}) \]$;
  \item
  if $s > 0$ and ${\sigma}_{s} = {\sigma}^{\frown}_{r}{\langle j \rangle}$ for some $r < s$, then ${k}_{s} \in {A}_{{k}_{r}, j}$;
  \item
  ${\UUU}_{s} \subseteq {\III}^{+}$ is an ultrafilter on $\omega$ such that $\forall j \in \omega \[D \cap {A}_{{k}_{s}, j} \in {\UUU}_{s}\]$;
  \item
  $\forall p, q, r \leq s \[ p < q \implies \{m \in D: m > {k}_{q} \ \text{and} \ \rr({k}_{p}, m) < \rr({k}_{q}, m)\} \in {\UUU}_{r} \]$.
 \end{enumerate}
 Suppose for a moment that this construction can be carried out.
 Put $B = \{ {k}_{s}: s < \omega \}$.
 Then clearly $B$ is non-empty, and (1) ensures that $B \subseteq D \subseteq A$ and that $B$ satisfies the property claimed in the first paragraph of the proof.
 Consider $i \in B$ and $j \in \omega$.
 Then $i={k}_{r}$ for some $r < \omega$, and ${\sigma}^{\frown}_{r}{\langle j \rangle} = {\sigma}_{s}$ for some $r < s < \omega$.
 By (2) $n={k}_{s} \in B \cap {A}_{{k}_{r}, j} = B \cap {A}_{i, j}$, and by (1) $n = {k}_{s} > {k}_{r} = i$.
 Thus $B$ has the required properties.
 
 To construct $\seq{k}{s}{<}{\omega}$ and $\seq{\UUU}{s}{<}{\omega}$, proceed by induction.
 When $s=0$, let ${k}_{s} \in D$ be arbitrary.
 By the choice of $D$, $\forall j \in \omega \[ D \cap {A}_{{k}_{s}, j} \in {\III}^{+} \]$.
 Since $\forall j < j' < \omega \[{A}_{{k}_{s}, j'} \subseteq {A}_{{k}_{s}, j} \]$, $\left\{ D \cap {A}_{{k}_{s}, j}: j \in \omega \right\}$ forms a descending collection of elements of ${\III}^{+}$.
 Therefore, it is possible to find an ultrafilter ${\UUU}_{s}$ on $\omega$ such that $\left\{ D \cap {A}_{{k}_{s}, j}: j \in \omega \right\} \subseteq {\UUU}_{s} \subseteq {\III}^{+}$.
 This fulfils (1)--(4) for $s=0$.

 Now assume that $s \in \omega$ and that $\seq{k}{r}{\leq}{s}$ and $\seq{\UUU}{r}{\leq}{s}$ satisfying (1)--(4) for all $r \leq s$ are given.
 For some unique $r \leq s$ and $j \in \omega$, ${\sigma}_{s+1} = {\sigma}^{\frown}_{r}{\langle j \rangle}$.
 By (3) ${\UUU}_{r} \subseteq {\III}^{+}$ is an ultrafilter on $\omega$ with $D \cap {A}_{{k}_{r}, j} \in {\UUU}_{r}$.
 The following simple but useful claim is a corollary to Lemma \ref{lem:dense}.
 \begin{Claim} \label{claim:shift1}
  $\forall C \in {\UUU}_{r} \exists I \in \III \forall i \in C \setminus I \forall w \in \omega \[ \left( C \setminus I \right) \cap {A}_{i, w} \in {\III}^{+} \]$.
 \end{Claim}
 \begin{proof}
  Put $I = \left\{ i \in C: \exists w \in \omega \[ C \cap {A}_{i, w} \notin {\III}^{+} \] \right\}$.
  To see that $I \in \III$, suppose for a contradiction that $I \in {\III}^{+}$.
  Applying (2) of Lemma \ref{lem:dense}, find $J \subseteq I$ such that $J \in {\III}^{+}$ and $\forall i \in J \forall w \in \omega\[ J \cap {A}_{i, w} \in  {\III}^{+} \]$.
  As $J$ is non-empty, fix some $i \in J$.
  Then $i \in C \subseteq \omega$ and for some $w \in \omega$, $C \cap {A}_{i, w} \notin {\III}^{+}$.
  Also, $J \cap {A}_{i, w} \in {\III}^{+}$.
  However this is a contradiction because $J \cap {A}_{i, w} \subseteq I \cap {A}_{i, w} \subseteq C \cap {A}_{i, w} \subseteq C \subseteq \omega$.
  Thus $I \in \III$.
  To see that $I$ has the required properties, fix $i \in C \setminus I$ and $w \in \omega$.
  Then $C \cap {A}_{i, w} \in {\III}^{+}$ by definition of $I$.
  Therefore, $\left( C \setminus I \right) \cap {A}_{i, w} = \left( C \cap {A}_{i, w} \right) \setminus I \in {\III}^{+}$, as required.
 \end{proof}
 For each $p, q \leq s$ with $p < q$, define
 \begin{align*}
  {E}_{p, q} = \{ m \in D: m > {k}_{q} \ \text{and} \ \rr({k}_{p}, m) < \rr({k}_{q}, m) \} \in {\UUU}_{r}.
 \end{align*}
 Define $E = \bigcap \left( \left\{ D \cap {A}_{{k}_{r}, j} \right\} \cup \left\{ {E}_{p, q}: p, q \leq s \ \text{and} \ p < q \right\} \right) \in {\UUU}_{r}$.
 For each $p \leq s$ and $i \in E$ with ${k}_{p} < i$, define ${F}_{p, i} = \left\{ m \in D: i < m \ \text{and} \ \rr({k}_{p}, m) < \rr(i, m) \right\}$.
 Define
 \begin{align*}
  {G}_{p} = \left\{ i \in E: {k}_{p} < i \ \text{and} \ \forall w \in \omega \[ E \cap {F}_{p, i} \cap {A}_{i, w} \in {\III}^{+} \] \right\}.
 \end{align*}
 \begin{Claim} \label{claim:shift2}
  $\forall p \leq s \[ {G}_{p} \in {\UUU}_{r} \]$.
 \end{Claim}
 \begin{proof}
  Suppose not and fix a counterexample $p \leq s$.
  Since $\{ i \in E: i > {k}_{p} \} \in {\UUU}_{r}$, it follows that ${\bar{G}}_{p} = \left\{ i \in E: i > {k}_{p} \ \text{and} \ \exists w \in \omega \[ E \cap {F}_{p, i} \cap {A}_{i, w} \in \III \] \right\} \in {\UUU}_{r}$.
  Using Claim \ref{claim:shift1} fix ${I}_{0} \in \III$ so that $\forall i \in H \forall w \in \omega \[H \cap {A}_{i, w} \in {\III}^{+} \]$, where $H = {\bar{G}}_{p} \setminus {I}_{0}$.
  Since ${\bar{G}}_{p} \in {\UUU}_{r}$ and ${I}_{0} \in \III$, $H \in {\UUU}_{r}$.
  Consider some $i \in H$.
  Then $i \in E$, ${k}_{p} < i$, and for some $w \in \omega$, ${I}_{1} = E \cap {F}_{p, i} \cap {A}_{i, w} \in \III$, while $H \cap {A}_{i, w} \in {\III}^{+}$.
  As $\left( H \cap {A}_{i, w} \right) \setminus {I}_{1} \in {\III}^{+}$, we may select $i' \in \left( H \cap {A}_{i, w} \right) \setminus {I}_{1}$ with $i < i'$.
  Then $i' \in E \cap {A}_{i, w}$, whence $i' \notin {F}_{p, i}$.
  Since $E \subseteq D$, $i' \in D$ and $i < i'$, whence $\rr({k}_{p}, i') > \rr(i, i')$.
  As ${k}_{p} < i < i'$, the properties of $\rr$ imply that $\rr(i, i') < \rr({k}_{p}, i') = \rr({k}_{p}, i)$.
  We have thus proved that
  \begin{align*}
   \forall i \in H \exists i' \in H \[ i < i' \ \text{and} \ \rr(i, i') < \rr({k}_{p}, i') = \rr({k}_{p}, i) \].
  \end{align*}
 Now $H$ being non-empty, we may fix ${i}_{0} \in H$ and put $u = \rr({k}_{p}, {i}_{0})$.
 Construct ${i}_{0} < {i}_{1} < \dotsb < {i}_{u+1}$ so that for each $v \leq u+1$, ${i}_{v} \in H$ and $u = \rr({k}_{p}, {i}_{v})$ as follows.
 Suppose $v < u + 1$ and that ${i}_{v} \in H$ with $\rr({k}_{p}, {i}_{v}) = u$.
 Applying the property proved above we can find ${i}_{v+1} \in H$ with ${i}_{v} < {i}_{v+1}$ and $\rr({i}_{v}, {i}_{v+1}) < \rr({k}_{p}, {i}_{v+1}) = \rr({k}_{p}, {i}_{v}) = u$.
 By construction for each $v < u + 1$, $\rr({i}_{v}, {i}_{v+1}) < u$.
 By property (3) of $\rr$, this implies that $\forall v < u+1 \[ \rr({i}_{v}, {i}_{u+1}) < u \]$.
 As $\left\{ {i}_{0}, \dotsc, {i}_{u} \right\}$ is a set of size $u+1$, the pigeonhole principle implies that for some $0 \leq v < v' \leq u$, $\rr({i}_{v}, {i}_{u+1}) = \rr({i}_{v'}, {i}_{u+1})$, contradicting property (1) of $\rr$.
 This contradiction concludes the proof of the claim.
 \end{proof}
 For each $r', p \leq s$, define ${H}_{r', p} = \left\{ i \in E: i > {k}_{p} \ \text{and} \ {F}_{p, i} \in {\UUU}_{r'} \right\}$.
 \begin{Claim} \label{claim:shift3}
  For each $r', p \leq s$, ${H}_{r', p} \in {\UUU}_{r}$.
 \end{Claim}
 \begin{proof}
  Suppose not and fix some counterexample $r', p \leq s$.
  Since $\left\{ i \in E: i > {k}_{p} \right\} \in {\UUU}_{r}$, it follows that ${\bar{H}}_{r', p} = \left\{ i \in E: i > {k}_{p} \ \text{and} \ {F}_{p, i} \notin {\UUU}_{r'} \right\} \in {\UUU}_{r}$.
  Consider some $i \in E$ with $i > {k}_{p}$ and ${F}_{p, i} \notin {\UUU}_{r'}$.
  By (3) of the induction hypothesis applied to $r' \leq s$, $D \cap {A}_{{k}_{r'}, 0} \in {\UUU}_{r'}$, and so $\{ m \in D: m > i \} \in {\UUU}_{r'}$.
  Therefore, ${\bar{F}}_{p, i} = \left\{ m \in D: m > i \ \text{and} \ \rr({k}_{p}, m) > \rr(i, m) \right\} \in {\UUU}_{r'}$.
  Now ${\bar{H}}_{r', p}$ is an infinite subset of $\omega$.
  Fix any ${i}_{0} \in {\bar{H}}_{r', p}$ and put $a = \rr({k}_{p}, {i}_{0})$.
  Observe that for any $m \in {\bar{F}}_{p, {i}_{0}}$, ${k}_{p} < {i}_{0} < m$ and $\rr({i}_{0}, m) < \rr({k}_{p}, m)$, whence $\rr({k}_{p}, m) = \rr({k}_{p}, {i}_{0}) = a$.
  Choose $\left\{ {i}_{1}, \dotsc, {i}_{a+1} \right\} \subseteq {\bar{H}}_{r', p}$ such that ${i}_{0} < {i}_{1} < \dotsb < {i}_{a+1}$.
  Let $F = \bigcap \left\{ {\bar{F}}_{p, {i}_{b}}: 0 \leq b \leq a+1 \right\} \in {\UUU}_{r'}$, and fix $m \in F$.
  As $m \in {\bar{F}}_{p, {i}_{0}}$, $\rr({k}_{p}, m) = a$.
  For each $1 \leq b \leq a+1$, as $m \in {\bar{F}}_{p, {i}_{b}}$, ${i}_{b} < m$ and $\rr({i}_{b}, m) < \rr({k}_{p}, m) = a$.
  Hence by the pigeonhole principle, for some $1 \leq b < b' \leq a+1$, $\rr({i}_{b}, m) = \rr({i}_{b'}, m)$, contradicting property (1) of $\rr$.
  This contradiction proves the claim.
 \end{proof}
 Let
 \begin{align*}
  H = \left( \bigcap \left\{ {G}_{p}: p \leq s \right\} \right) \cap \left( \bigcap \left\{ {H}_{r', p}: r', p \leq s \right\} \right) \in {\UUU}_{r}.
 \end{align*}
 Choose ${k}_{s+1} \in H$.
 Then ${k}_{s+1} \in E$ and $\forall p \leq s \[ {k}_{p} < {k}_{s+1} \]$.
 Put
 \begin{align*}
  a = \max\left\{ \rr({k}_{p}, {k}_{s+1}): p \leq s \right\},
 \end{align*}
 and suppose $q \leq s$ is such that $a = \rr({k}_{q}, {k}_{s+1})$.
 As ${k}_{s+1} \in {G}_{q}$,
 \begin{align*}
  \forall w \in \omega \[ E \cap {F}_{q, {k}_{s+1}} \cap {A}_{{k}_{s+1}, w} \in {\III}^{+} \].
 \end{align*}
 Since $\forall w < w' < \omega \[ {A}_{{k}_{s+1}, w'} \subseteq {A}_{{k}_{s+1}, w} \]$, $\left\{ E \cap {F}_{q, {k}_{s+1}} \cap {A}_{{k}_{s+1}, w}: w \in \omega \right\}$ forms a descending sequence of members of ${\III}^{+}$.
 Therefore, there exists an ultrafilter ${\UUU}_{s+1}$ on $\omega$ such that $\left\{ E \cap {F}_{q, {k}_{s+1}} \cap {A}_{{k}_{s+1}, w}: w \in \omega \right\} \subseteq {\UUU}_{s+1} \subseteq {\III}^{+}$.
 We have $E \cap {F}_{q, {k}_{s+1}} \cap {A}_{{k}_{s+1}, 0} \subseteq E \subseteq \omega$ and $E \cap {F}_{q, {k}_{s+1}} \cap {A}_{{k}_{s+1}, 0} \subseteq {F}_{q, {k}_{s+1}} \subseteq D \subseteq \omega$, whence $E \in {\UUU}_{s+1}$ and ${F}_{q, {k}_{s+1}} \in {\UUU}_{s+1}$.
 Similarly for each $w \in \omega$, $E \cap {F}_{q, {k}_{s+1}} \cap {A}_{{k}_{s+1}, w} \subseteq D \cap {A}_{{k}_{s+1}, w} \subseteq D \subseteq \omega$, and so $\forall w \in \omega \[ D \cap {A}_{{k}_{s+1}, w} \in {\UUU}_{s+1} \]$.
 For every $p \leq s$, if $m \in {F}_{q, {k}_{s+1}}$, then $m \in D$, ${k}_{s+1} < m$, and $\rr({k}_{q}, m) < \rr({k}_{s+1}, m)$.
 Then ${k}_{p} < {k}_{s+1} < m$, and by properties (2) and (3) of $\rr$ and by the choice of $q$, $\rr({k}_{p}, m) \leq \max\left\{ \rr({k}_{p}, {k}_{s+1}), \rr({k}_{s+1}, m) \right\} \leq \max\left\{ \rr({k}_{q}, {k}_{s+1}), \rr({k}_{s+1}, m)  \right\} \leq \max \left\{ \max \left\{ \rr({k}_{q}, m), \rr({k}_{s+1}, m) \right\}, \rr({k}_{s+1}, m) \right\} = \rr({k}_{s+1}, m)$.
 By property (1) of $\rr$, we conclude that $\rr({k}_{p}, m) < \rr({k}_{s+1}, m)$, and hence that $m \in {F}_{p, {k}_{s+1}}$.
 Therefore, for every $p \leq s$, ${F}_{q, {k}_{s+1}} \subseteq {F}_{p, {k}_{s+1}} \subseteq D \subseteq \omega$.
 Therefore, $\forall p \leq s \[ {F}_{p, {k}_{s+1}} \in {\UUU}_{s+1} \]$.
 
 Unfix $q$ from the last paragraph.
 Let us verify that (1)--(4) are satisfied by $\seq{k}{r'}{\leq}{s+1}$ and $\seq{\UUU}{r'}{\leq}{s+1}$.
 We have noted above that ${k}_{s+1} \in E \subseteq D$ and that $\forall r' \leq s \[ {k}_{r'} < {k}_{s+1} \]$.
 Consider $p, q \leq s$ with $p < q$.
 Then ${E}_{p, q}$ is defined and ${k}_{s+1} \in {E}_{p, q}$, whence $\rr({k}_{p}, {k}_{s+1}) < \rr({k}_{q}, {k}_{s+1})$.
 This verifies (1).
 (2) holds because ${\sigma}_{s+1} = {\sigma}^{\frown}_{r}{\langle j \rangle}$, where $r \leq s$ and $j \in \omega$ are unique, and ${k}_{s+1} \in E \subseteq D \cap {A}_{{k}_{r}, j}$.
 For (3), by definition, ${\UUU}_{s+1}$ is an ultrafilter on $\omega$ with ${\UUU}_{s+1} \subseteq {\III}^{+}$, and we have noted above that $\forall w \in \omega \[ D \cap {A}_{{k}_{s+1}, w} \in {\UUU}_{s+1} \]$.
 Finally, we turn to (4).
 Fix $p, q, r' \leq s+1$ with $p<q$ and define ${L}_{p, q} = \left\{ m \in D: m > {k}_{q} \ \text{and} \ \rr({k}_{p}, m) < \rr({k}_{q}, m) \right\}$.
 We must show ${L}_{p, q} \in {\UUU}_{r'}$.
 Note that since $p \leq s$, there are four cases to consider.
 Suppose first that $q, r' \leq s$.
 Then by the induction hypothesis (4) applied to $s$, ${L}_{p, q} \in {\UUU}_{r'}$.
 Suppose next that $q \leq s$ and $r' = s+1$.
 Then ${E}_{p, q}$ is defined and ${E}_{p, q} = {L}_{p, q}$.
 Since $E \subseteq {E}_{p, q} \subseteq D \subseteq \omega$, and since $E \in {\UUU}_{s+1}$, ${E}_{p, q} \in {\UUU}_{s+1}$ as well.
 Thirdly, suppose $q=s+1$ and $r' \leq s$.
 Then ${H}_{r', p}$ is defined, and since ${k}_{s+1} \in {H}_{r', p}$, ${k}_{s+1} \in E$, ${k}_{s+1} > {k}_{p}$ and ${F}_{p, {k}_{s+1}} \in {\UUU}_{r'}$.
 Since by definition, ${L}_{p, q} = \left\{ m \in D: m > {k}_{s+1} \ \text{and} \ \rr({k}_{p}, m) < \rr({k}_{s+1}, m) \right\} = {F}_{p, {k}_{s+1}}$, ${L}_{p, q} \in {\UUU}_{r'}$ as well.
 Finally suppose that $q=s+1=r'$.
 Then since ${k}_{s+1} \in E$ and ${k}_{p} < {k}_{s+1}$, ${L}_{p, q} = \left\{ m \in D: m > {k}_{s+1} \ \text{and} \ \rr({k}_{p}, m) < \rr({k}_{s+1}, m) \right\} = {F}_{p, {k}_{s+1}}$, and since we have showed in the previous paragraph that ${F}_{p, {k}_{s+1}} \in {\UUU}_{s+1}$, ${L}_{p, q} \in {\UUU}_{s+1} = {\UUU}_{r'}$ as well.
 This concludes the verification of (1)--(4).
 Therefore the induction can proceed.
\end{proof}
\begin{Cor} \label{cor:famP}
 Let $\pr{X}{\TTT}$ be a topological space.
 Suppose that $\seq{x}{\alpha}{<}{{\omega}_{1}}$ is a 1--1 enumeration of all the points of $X$.
 Let $\PPP \subseteq {\[{\omega}_{1}\]}^{< {\aleph}_{1}}$ be a family such that:
 \begin{enumerate}
  \item
  $\PPP$ is \emph{hereditary}, that is, $\forall A \in \PPP \forall B \subseteq A \[B \in \PPP\]$;
  \item
  there exists $A \in \PPP$ such that the subspace $\{{x}_{\alpha}: \alpha \in A\}$ is homeomorphic to $\QQ$.
 \end{enumerate}
 Then there exists $A \in \PPP$ such that $\otp(A) = \omega$, $A$ is $\bar{\rho}$-shift-increasing, and the subspace $\{ {x}_{\alpha}: \alpha \in A \}$ is homeomorphic to $\QQ$.
\end{Cor}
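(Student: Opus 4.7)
The plan is to reduce to Theorem~\ref{thm:shiftinc}. First I would exhibit $A' \subseteq A$ with $A' \in \PPP$, $\otp(A') = \omega$, and $\{x_\alpha : \alpha \in A'\} \cong \QQ$. Given such $A'$, enumerate $A' = \{\alpha_n : n < \omega\}$ in strictly increasing ordinal order, set $x_n = x_{\alpha_n}$, and define $\rr : \pc{\omega}{2} \to \omega$ by $\rr(k, l) = \bar{\rho}(\alpha_k, \alpha_l)$ for $k < l$. By Lemma~\ref{lem:rhobar}, the function $\rr$ satisfies the three hypotheses listed just before Definition~\ref{def:sinc}. Invoking Theorem~\ref{thm:shiftinc} with $A = \omega \in {\III}^{+}$ yields $B \subseteq \omega$ that is non-empty, $\rr$-shift-increasing, and satisfying the density property of Lemma~\ref{lem:dense}(1). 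The set $A'' = \{\alpha_n : n \in B\}$ then has the required properties: it lies in $\PPP$ by hereditariness; $B$ must be infinite by the density property, so $\otp(A'') = \omega$ since $\langle \alpha_n \rangle$ is ordinal-increasing; for $\alpha < \beta < \gamma$ in $A''$ corresponding to $k < l < m$ in $B$ we have $\bar{\rho}(\alpha, \beta) = \rr(k, l) \leq \rr(l, m) = \bar{\rho}(\beta, \gamma)$, so $A''$ is $\bar{\rho}$-shift-increasing; and $\{x_\alpha : \alpha \in A''\} = \{x_n : n \in B\} \cong \QQ$ by Lemma~\ref{lem:dense}(1).

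The main technical step is producing $A'$ of order type $\omega$. I would first enumerate $A = \{\beta_n : n < \omega\}$ bijectively (not necessarily respecting ordinal order), set $y_n = x_{\beta_n}$, and let $\III$ be the scattered ideal of Definition~\ref{def:scattered}. Since $\{y_n\} \cong \QQ$, one has $\omega \in {\III}^{+}$, and Lemma~\ref{lem:dense}(2) produces $D \subseteq \omega$ with $D \in {\III}^{+}$ and $D \cap {A}_{i, j} \in {\III}^{+}$ for all $i \in D$ and $j \in \omega$. The remaining task is to find $D' \subseteq D$ such that $\langle \beta_n : n \in D' \rangle$ is strictly increasing in ordinal order and $\{y_n : n \in D'\}$ is still homeomorphic to $\QQ$; then $A' = \{\beta_n : n \in D'\}$ works by hereditariness.

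The construction of $D'$ proceeds by an inductive ultrafilter argument analogous to the proof of Theorem~\ref{thm:shiftinc}. At stage $s$, given $n_0 < \cdots < n_s$ in $D$ with $\beta_{n_0} < \cdots < \beta_{n_s}$ in ordinal order, maintain an ultrafilter ${\UUU}_{s} \subseteq {\III}^{+}$ containing $D \cap {A}_{n_s, j}$ for each $j \in \omega$ (to ensure topological richness around $y_{n_s}$) together with the ordinally large set $\{m \in D : \beta_m > \beta_{n_s}\}$ (to ensure the next choice $n_{s+1}$ satisfies $\beta_{n_{s+1}} > \beta_{n_s}$). The chief obstacle is that sets of the latter form need not lie in ${\III}^{+}$ in general. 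To overcome this, I would exploit the dichotomy that if a countable $Z \subseteq \{y_n : n < \omega\}$ fails to be homeomorphic to $\QQ$, then some point of $Z$ is isolated in $Z$, forcing the complement to contain a non-empty open subset and hence a copy of $\QQ$. This allows one to restrict the construction to a sub-region of $A$ where the ordinally large complements stay in ${\III}^{+}$ throughout, so that the ultrafilter step goes through at every stage.
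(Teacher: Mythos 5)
Your reduction to Theorem~\ref{thm:shiftinc} is correct and is essentially the paper's argument: pass to a set $A'$ of order type $\omega$ carrying a copy of $\QQ$, transfer $\bar\rho$ to $\rr$ on $\omega$ via the increasing enumeration (Lemma~\ref{lem:rhobar} gives the three required properties of $\rr$), apply the theorem, and pull back. (The paper concludes by passing to a further subset $A\subseteq B$ witnessing $B\in\III^+$, whereas you observe that $B$ itself is already a copy of $\QQ$; both are fine, since a nonempty $B$ with the density property is dense-in-itself by the proof of Lemma~\ref{lem:dense}(1).) The divergence is in the ``main technical step'': the paper does not prove the existence of $A'$ at all, but cites it as Lemma 6 of \cite{galvin-above-two}, while you attempt to prove it, and your sketch has a genuine gap.

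The gap is the sentence asserting that the stated dichotomy ``allows one to restrict the construction to a sub-region of $A$ where the ordinally large complements stay in $\III^+$ throughout.'' That restriction is precisely the content of the lemma you are trying to prove, and the dichotomy (a non-$\QQ$ subset has an isolated point, so its complement contains a nonempty open set) does not yield it: it tells you the complement of a single scattered or non-dense-in-itself set is topologically large, but the obstruction is that for some $\beta$ the tail $\{m\in D:\beta_m>\beta\}$ may be scattered, and nothing you have said rules this out on any particular sub-region, nor explains how repeated application of the dichotomy terminates. What is missing is a well-foundedness argument. For instance: let $\beta^*$ be the least ordinal such that $\{m\in D:\beta_m\le\beta^*\}\in\III^+$, refine that initial segment via Lemma~\ref{lem:dense}(2) to a copy $E$ of $\QQ$ all of whose balls are in $\III^+$; then for every $\beta<\beta^*$ the set $\{m\in E:\beta_m\le\beta\}$ is scattered by minimality, so every set of the form $(E\cap A_{i,j})\cap\{m:\beta_m>\beta\}$ is in $\III^+$, and a straightforward greedy tree construction (no ultrafilters are needed for this part) picks each new point with index above all previously chosen indices while keeping the result dense-in-itself, giving the desired order type $\omega$. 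Without some such minimality or induction on order type, your construction can stall at the very first stage where an ordinal tail happens to be scattered.
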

\begin{proof}
 It is not hard to show that there is an $M \in \PPP$ such that $\otp(M) = \omega$ and the subspace $\{{x}_{\alpha}: \alpha \in M\}$ is homeomorphic to $\QQ$.
 Indeed this is proved in Lemma 6 of \cite{galvin-above-two}.
 Let $\{{\alpha}_{n}: n \in \omega\}$ be the strictly increasing enumeration of $M$.
 Define ${y}_{n} = {x}_{{\alpha}_{n}} \in X$, for all $n \in \omega$.
 Then $\seq{y}{n}{\in}{\omega}$ is a sequence of distinct points of $X$ and the subspace $\{{y}_{n}: n \in \omega\}$ is homeomorphic to $\QQ$.
 Let $d$ be a metric on $\{{y}_{n}: n \in \omega\}$ that is compatible with the subspace topology.
 Define $\rr: {\[\omega\]}^{2} \rightarrow \omega$ by setting $\rr(k, l) = \bar{\rho}({\alpha}_{k}, {\alpha}_{l})$, for all $k < l < \omega$.
 Let the ideal $\III$ and the sets ${A}_{i, j}$ be as in Definitions \ref{def:scattered} and \ref{def:Aij}.
 Then Theorem \ref{thm:shiftinc} applies and implies that there exists a set $B \subseteq \omega$ such that $B$ is $\rr$-shift-increasing, $B\neq \emptyset$, and $\forall i \in B \forall j \in \omega \exists n \in B \cap {A}_{i, j}\[n \neq i\]$.
 Applying (1) of Lemma \ref{lem:dense} to $B$ we conclude that $B \in {\III}^{+}$.
 By definition of $\III$, there exists $A \subseteq B$ so that the subspace $\{{y}_{n}: n \in A\}$ is homeomorphic to $\QQ$.
 Let $N = \{{\alpha}_{n}: n \in A\} \subseteq M$.
 As $\PPP$ is hereditary, $N \in \PPP$.
 Clearly, $A$ is an infinite subset of $\omega$, and so $\otp(N) = \omega$.
 By definition $N$ is $\bar{\rho}$-shift-increasing.
 Finally, the subspace $\{{x}_{\alpha}: \alpha \in N\} = \{{x}_{{\alpha}_{n}}: n \in A\} = \{{y}_{n}: n \in A\}$ is homeomorphic to $\QQ$.
 So $N$ is as needed.
\end{proof}
\def\polhk#1{\setbox0=\hbox{#1}{\ooalign{\hidewidth
  \lower1.5ex\hbox{`}\hidewidth\crcr\unhbox0}}}
\providecommand{\bysame}{\leavevmode\hbox to3em{\hrulefill}\thinspace}
\providecommand{\MR}{\relax\ifhmode\unskip\space\fi MR }
\providecommand{\MRhref}[2]{%
  \href{http://www.ams.org/mathscinet-getitem?mr=#1}{#2}
}
\providecommand{\href}[2]{#2}


\begin{thebibliography}{1}

\bibitem{baumtop}
J.~E. Baumgartner, \emph{Partition relations for countable topological spaces},
  J. Combin. Theory Ser. A \textbf{43} (1986), no.~2, 178--195.

\bibitem{engelking}
R.~Engelking, \emph{General topology}, second ed., Sigma Series in Pure
  Mathematics, vol.~6, Heldermann Verlag, Berlin, 1989, Translated from the
  Polish by the author.

\bibitem{lopez-abad-todorcevic-13}
J.~Lopez-Abad and S.~Todorcevic, \emph{Positional graphs and conditional
  structure of weakly null sequences}, Adv. Math. \textbf{242} (2013),
  163--186.

\bibitem{galvin-above-two}
D.~Raghavan and S.~Todorcevic, \emph{Galvin's problem in higher dimensions},
  preprint, 2022, pp.~1--6.

\bibitem{squarebracket}
S.~Todorcevic, \emph{Partitioning pairs of countable ordinals}, Acta Math.
  \textbf{159} (1987), no.~3-4, 261--294.

\bibitem{walks}
\bysame, \emph{Walks on ordinals and their characteristics}, Progress in
  Mathematics, vol. 263, Birkh\"auser Verlag, Basel, 2007.

\end{thebibliography}
\end{document}